\documentclass{amsart}

\usepackage{amsbsy, amsmath, amsfonts, braket, latexsym, amsthm, amsxtra, amssymb, amscd, graphics, mathrsfs, verbatim}

\usepackage[usenames,dvipsnames]{xcolor}

\usepackage[shortlabels]{enumitem}
\SetEnumerateShortLabel{a}{\textup{(\alph*)}}
\SetEnumerateShortLabel{A}{\textup{(\Alph*)}}
\SetEnumerateShortLabel{1}{\textup{(\arabic*)}}
\SetEnumerateShortLabel{i}{\textup{(\roman*)}}
\SetEnumerateShortLabel{I}{\textup{(\Roman*)}}

\usepackage[initials,lite,alphabetic]{amsrefs} 

\usepackage[all]{xy}

\theoremstyle{plain}

\newtheorem{thm}{Theorem}[section]
\newtheorem*{lm*}{Lemma}

\newtheorem*{proper*}{Property}
\newtheorem*{thm*}{Theorem}

\newtheorem{conj}[thm]{Conjecture}

\newtheorem{cor}[thm]{Corollary}

\newtheorem{lem}[thm]{Lemma}

\newtheorem{prop}[thm]{Proposition}
\newtheorem{ques}[thm]{Question}

\theoremstyle{definition}

\newtheorem*{df*}{Definition}

\newtheorem{ex-notn}[thm]{Example/Notation}

\newtheorem{exam}[thm]{Example}

\theoremstyle{remark}

\newtheorem*{acknowledgement*}{Acknowledgement}

\newtheorem*{ex*}{Example}
\newtheorem*{exer*}{Exercise}

\newtheorem*{prob*}{Problem}
\newtheorem*{prop*}{Proposition}
\newtheorem*{rem*}{Remark}

\newtheorem{rem}[thm]{Remark}



\def\depth{\operatorname{depth}}
















\def\sdepth{\operatorname{sdepth}} 




\def\KK{\mathbb{K}}

\def\ZZ{\mathbb{Z}}

\def\calP{\mathcal{P}}


\def\bdx{\mathbf{x}}

\def\ceil#1{\left\lceil #1 \right\rceil}

\def\floor#1{\left\lfloor #1 \right\rfloor}

\def\Index#1{\emph{#1}}

\def\mySet#1{\left\{ #1\right \}}


\def\sqr#1#2{{\vcenter{\hrule height.#2pt
\hbox{\vrule width.#2pt height#1pt \kern#1pt
\vrule width.#2pt}
\hrule height.#2pt}}}

\setlength{\marginparwidth}{1.2in}

\let\oldmarginpar\marginpar
\renewcommand\marginpar[1]{\-\oldmarginpar[\raggedleft\footnotesize #1]%
{\raggedright\footnotesize #1}}



\def\opn#1#2{\def#1{\operatorname{#2}}} 
\opn\lex{lex}
\opn\rev{rev}
\opn\Lex{Lex}
\opn\GL{GL}
\opn\initial{in}

\def\Andre{{Andr\'e}}

\def\Cimpoeas{Cimpoea\c{s}}

\begin{document}
\title{When will the Stanley depth increase}
\author{Yi-Huang Shen} 
\address{Department of Mathematics, University of Science and Technology of China, Hefei, Anhui, 230026, China}
\address{Wu Wen-Tsun Key Laboratory of Mathematics, USTC, Chinese Academy of Sciences, Hefei, Anhui, 230026, China}
\thanks{The author is supported by the National Natural Science Foundation of China}
\email{yhshen@ustc.edu.cn} 
\subjclass[2010]{ Primary
05E45, 
05E40, 
06A07; 
Secondary
13C13, 
05C70} 
\keywords{Stanley depth; Squarefree monomial ideal} 
\begin{abstract}
  Let $I\subset S=\KK[x_1,\dots,x_n]$ be an ideal generated by squarefree monomials of degree $\ge d$. If the number of degree $d$ minimal generating monomials $\mu_d(I)\le \min(\binom{n}{d+1},\sum_{j=1}^{n-d}\binom{2j-1}{j})$, then the Stanley depth $\sdepth_S(I)\ge d+1$.
\end{abstract}
\maketitle

\section{Introduction}

Throughout this paper, let $\KK$ be a field and $S=\KK[x_1,\dots,x_n]$ a polynomial ring in $n$ variables over $\KK$. The ring $S$ has a natural $\ZZ^n$-grading. If $M$ is a finitely generated $\ZZ^n$-graded $S$-module, a \Index{Stanley decomposition} of $M$ is a finite direct sum decomposition
\[
\calP: M=\bigoplus_{i=1}^m u_i \KK[Z_i]
\]
of $M$ as a $\ZZ^n$-graded $\KK$-vector space, where each $u_i\in M$ is homogeneous and $Z_i\subset \Set{x_1,\dots,x_n}$. Here, $u_i\KK[Z_i]$ is considered as a free $\KK[Z_i]$-submodule of $M$. The \Index{Stanley depth} of this decomposition is $\sdepth(\calP)=\min\mySet{\left| Z_i \right|: 1\le i \le m}$ and the \Index{Stanley depth} of the module $M$ is
\[
\sdepth(M):=\max\Set{\sdepth(\calP):\text{$\calP$ is a Stanley decomposition of $M$}}.
\]
If we consider isomorphism instead of equality in the previous Stanley decomposition $\calP$, we will land up in the notion of \Index{Hilbert depth}, which is the main topic of \cite{bruns-2009}.

The driving force for investigating the Stanley depth of a finitely generated $\ZZ^n$-graded module $M$ is the conjecture raised by Stanley \cite{MR666158}, which says
\[
\sdepth(M)\ge \depth(M).
\tag{\dag}
\]
This conjecture will imply (\cite[4.5]{MR2366164}) that Cohen-Macaulay simplicial complexes are partitionable, which was separately conjectured by Garsia {\cite[5.2]{MR597728}} and Stanley \cite[p.\ 149]{MR526314}.

To have an insight into the properties of Stanley depth, one lacks the many powerful tools as those for the normal algebraic depth. Deciding the Stanley depth of interesting modules is already a headache for researchers. Currently, the Stanley depth is known only for a very narrow scope of modules, the overwhelming majority of which has equality in the Stanley conjecture (\dag).

The paper \cite{arxiv.0712.2308} by Herzog, Vladoiu and Zheng was a breakthrough along this line. Their method attacks the problem of computing the Stanley depth $\sdepth(I/J)$ for monomial ideals $J\subset I$ in $S$. This method, though not a panacea, contributes fundamentally to the knowledge of Stanley decompositions from both theoretical and computational perspectives. For instance, based on this method, Bir\'o et al.\ \cite{BHKTY} can show that $\sdepth_S(\braket{x_1,\dots,x_n})=\ceil{\frac{n}{2}}$. Notice that $\depth_S(\braket{x_1,\dots,x_n})=1$. Other nontrivial computations and estimates can be found in, for instance, \cite{KeYo2009}, \cite{Okazaki2009}, \cite{arXiv.org:0805.4461} and their references.

Throughout this paper, $I$ will be a monomial ideal in $S$, generated by squarefree monomials of degree $\ge d$. The task of the current paper is to investigate when will $\sdepth(I)\ge d+1$. Our main result is the following theorem.

\begin{thm}
  \label{main-thm}
  Let $I\subset S=\KK[x_1,\dots,x_n]$ be an ideal generated by squarefree monomials of degree $\ge d$. If the number of degree $d$ minimal generating monomials
  \[
  \mu_d(I)\le \min\left(\binom{n}{d+1}, \sum_{j=1}^{n-d}\binom{2j-1}{j}\right),
  \]
  then the Stanley depth $\sdepth_S(I)\ge d+1$.
\end{thm}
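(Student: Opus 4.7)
The approach is to reduce the question, via the Herzog--Vladoiu--Zheng (HVZ) method, to a matching problem on the characteristic poset of $I$, and then to apply Hall's marriage theorem together with the upper-shadow form of the Kruskal--Katona inequality. For a squarefree monomial ideal $I$, the HVZ framework identifies $\sdepth_S(I)$ with the maximum, over partitions of the characteristic poset $P_I=\{C\subseteq[n]:C\supseteq\supp(u)\text{ for some minimal generator }u\text{ of }I\}$ into intervals $[A,B]$ of the Boolean lattice $2^{[n]}$, of $\min|B|$. The task is therefore to produce a partition of $P_I$ in which every top has size at least $d+1$.

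The elements of $P_I$ of size exactly $d$ are precisely the supports of the degree-$d$ minimal generators; denote this family by $\calF$, so that $|\calF|=\mu_d(I)$. Each $A\in\calF$ must sit at the bottom of some interval whose top has size $\ge d+1$, whereas every other element of $P_I$ already has size $\ge d+1$ and may be taken as its own singleton interval. It therefore suffices to construct an injection $\phi\colon\calF\to\partial^+\calF$ into the upper shadow $\partial^+\calF=\{B\in\binom{[n]}{d+1}:B\supsetneq A\text{ for some }A\in\calF\}$: the intervals $[A,\phi(A)]$ for $A\in\calF$, together with the singletons $\{C\}$ for $C\in P_I\setminus(\calF\cup\phi(\calF))$, then form a partition of $P_I$ with every top of size $\ge d+1$.

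By Hall's marriage theorem, such an injection $\phi$ exists if and only if every $\calG\subseteq\calF$ satisfies $|\partial^+\calG|\ge|\calG|$. The hypothesis $\mu_d(I)\le\binom{n}{d+1}$ rules out the trivial obstruction $|\calG|>\binom{n}{d+1}\ge|\partial^+\calG|$, while the hypothesis $\mu_d(I)\le\sum_{j=1}^{n-d}\binom{2j-1}{j}$ is designed to force Hall's inequality through Kruskal--Katona. Indeed, complementing inside $[n]$ turns $\partial^+\calG$ into the lower shadow of a family of $(n-d)$-subsets of the same size; Kruskal--Katona then shows that if $m=|\calG|$ has cascade expansion $\binom{a_{n-d}}{n-d}+\binom{a_{n-d-1}}{n-d-1}+\cdots+\binom{a_s}{s}$ on $(n-d)$-subsets, then $|\partial^+\calG|\ge\sum_i\binom{a_i}{i-1}$. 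Because $\binom{a}{i-1}\ge\binom{a}{i}$ exactly when $a\le 2i-1$, the required inequality follows termwise as soon as $a_i\le 2i-1$ for every $i$; the largest integer $m$ admitting such a cascade is precisely $\sum_{j=1}^{n-d}\binom{2j-1}{j}$.

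The principal technical difficulty is the cascade analysis: one must verify that every integer $m\le\sum_{j=1}^{n-d}\binom{2j-1}{j}$ satisfies $a_i\le 2i-1$ throughout its cascade representation, a monotonicity statement on cascade expansions. A minor secondary check, handled by bookkeeping, is that the intervals $[A,\phi(A)]$ and the residual singletons really do partition $P_I$; this reduces to $\phi$ being injective with image disjoint from $\calF$, which is automatic since $|\phi(A)|=d+1>d=|A|$.
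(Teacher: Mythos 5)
Your overall strategy coincides with the paper's: the Herzog--Vladoiu--Zheng reduction to an interval partition, the reformulation as a system of distinct representatives between the degree-$d$ generators and their upper shadow, Hall's marriage theorem, and the Kruskal--Katona shadow bound are exactly the ingredients used there (the paper phrases the SDR condition as ``uniform collapsibility'' of the complement complex, which is the same matching problem after complementing in $[n]$). Your uniform treatment is even slightly cleaner in one respect: since the hypothesis forces $\mu_d(I)\le\sum_{j=1}^{n-d}\binom{2j-1}{j}$ in all cases, you avoid the separate appeal to the squarefree Veronese result that the paper invokes when $n\ge 2d+1$.

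However, there is a genuine gap at the one place where the real work lies. You reduce everything to the claim that every integer $m\le\xi_k:=\sum_{j=1}^{k}\binom{2j-1}{j}$ (with $k=n-d$) has a cascade (Macaulay) representation $m=\binom{a_k}{k}+\cdots+\binom{a_i}{i}$ with $a_j\le 2j-1$ for every $j$, so that the desired inequality $\sum_j\binom{a_j}{j-1}\ge\sum_j\binom{a_j}{j}$ would hold termwise. That claim is false: the set of integers whose Macaulay coefficients all satisfy $a_j\le 2j-1$ is not an initial segment, so knowing that $\xi_k$ is the largest such integer tells you nothing about the integers below it. For instance, with $k=3$ one has $\xi_3=14$, yet $9=\binom{4}{3}+\binom{3}{2}+\binom{2}{1}$ has $a_1=2>1$, so its last term contributes a deficit ($\binom{2}{0}=1<\binom{2}{1}=2$); the inequality $\partial_2(9)=6+3+1=10\ge 9$ still holds, but only because the surplus of the top term compensates, not termwise. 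This is exactly why the paper's key lemma ($\partial_{k-1}(x)\ge x$ for all $x\le\xi_k$) is proved by a global estimate: it isolates the block of indices with $a_j<2j-1$, bounds the total surplus from below and the total deficit from above via lattice-path (Catalan-type) counts, and shows the worst-case difference is exactly $1$. Without such an argument (or some substitute), your plan does not close, since Hall's condition must be checked for every subfamily size $m\le\mu_d(I)$, including values like $m=9$ whose cascades violate the termwise condition you rely on.
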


Let us finish this introduction by going over the structure of this paper. In section 2, we will go over Herzog, Vladoiu and Zheng's method for computing the Stanley depth of monomial ideals. We will tailor it to the squarefree case and prove a special case of the main theorem. In section 3, we will inspect several combinatorial constructions, which are essential for deciding when will the Stanley depth increase. In the final section, we will complete the proof and provide additional remarks and questions.

\section{Herzog, Vladoiu and Zheng's method}

By convention, we denote the set $\Set{1,2,\dots,n}$ by $[n]$. For the squarefree monomial ideal $I$, consider the associated set
\[
P_I:=\Set{ \Set{i_1,\dots,i_m}\subset [n] :  x_{i_1}\cdots x_{i_m}\in I, 1\le m \le n}.
\]
This is a partially ordered set (poset) with respect to inclusion. When $A,B\in P_I$, the interval $[A,B]$ is the set $\Set{C\in P_I: A\subset C \subset B}$.
Herzog, Vladoiu and Zheng's method \cite[2.5]{arxiv.0712.2308} for squarefree monomial ideals can be easily checked to be equivalent to the following characterization:

\begin{lem}
  Let $k$ be a positive integer. Then $\sdepth(I)\ge k$ if and only if $P_I$ has a disjoint partition $\calP: P_I = \bigcup_{i=1}^l [A_i,B_i]$ such that the cardinalities $\left| B_i \right|\ge k$, $1\le i \le l$.
\end{lem}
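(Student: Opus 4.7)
The plan is to prove the two directions separately by exploiting the bijection between squarefree monomials in $I$ and elements of $P_I$, where a subset $C \subset [n]$ corresponds to the squarefree monomial $\prod_{j \in C} x_j$.

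For the ``if'' direction, given a disjoint partition $P_I = \bigcup_{i=1}^{l}[A_i, B_i]$ with $|B_i| \ge k$, I would propose the Stanley decomposition
\[
I = \bigoplus_{i=1}^{l} u_{A_i}\,\KK[Z_i], \quad u_{A_i} := \prod_{j \in A_i} x_j, \quad Z_i := \{x_j : j \in B_i\},
\]
which has Stanley depth $|Z_i| = |B_i| \ge k$. The only nontrivial verification is that this is a direct sum equal to $I$. The key preliminary observation is that every monomial $m \in I$ satisfies $\supp(m) \in P_I$: any squarefree generator $g$ of $I$ dividing $m$ has $\supp(g) \subset \supp(m)$, so $\prod_{j \in \supp(m)} x_j$ is a multiple of $g$ and hence lies in $I$. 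Consequently $\supp(m)$ belongs to a unique interval $[A_i, B_i]$, and then $A_i \subset \supp(m) \subset B_i$ forces $m$ into exactly one summand.

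For the ``only if'' direction, starting from a Stanley decomposition $I = \bigoplus_i u_i \KK[Z_i]$ with all $|Z_i| \ge k$, the plan is to extract the partition of $P_I$ from the squarefree part of the decomposition. The crucial observation is that $u_i \KK[Z_i]$ contains a squarefree monomial only when $u_i$ is itself squarefree, since any divisor of a squarefree monomial is squarefree. Restricting to such indices $i$, set $A_i := \supp(u_i)$ and $B_i := A_i \cup \{j : x_j \in Z_i\}$. A direct check shows that a squarefree monomial with support $C$ lies in $u_i \KK[Z_i]$ precisely when $A_i \subset C \subset B_i$, so the squarefree monomials in $u_i \KK[Z_i]$ correspond bijectively to the interval $[A_i, B_i] \subset P_I$ (and $B_i \in P_I$ because $\prod_{j \in B_i} x_j$ is a multiple of $u_i$). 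Because the ambient decomposition is a direct sum, these intervals disjointly partition $P_I$, and clearly $|B_i| \ge |Z_i| \ge k$.

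Both directions are essentially mechanical once the dictionary between intervals in $P_I$ and Stanley pieces is set up; the main conceptual point is the remark that non-squarefree $u_i$ contribute nothing to the squarefree monomials of $I$, which is what permits the purely combinatorial poset $P_I$ to capture $\sdepth_S(I)$.
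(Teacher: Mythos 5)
Your proof is correct. Both directions check out: in the ``if'' direction the dictionary $m\in u_{A_i}\KK[Z_i]\Leftrightarrow A_i\subseteq\supp(m)\subseteq B_i$ (valid because $A_i\subseteq B_i$, so $Z_i$ contains the variables of $A_i$ and higher powers are absorbed) shows every monomial of $I$ lands in exactly one summand, and in the ``only if'' direction the observation that a summand containing a squarefree monomial must have squarefree $u_i$ is precisely what is needed to read off the intervals, with $|B_i|\ge|Z_i|\ge k$ and $B_i\in P_I$ as you note. The paper itself does not prove this lemma: it obtains it as the squarefree specialization ($g=(1,\dots,1)$) of Herzog--Vladoiu--Zheng's general theorem on the characteristic poset $P_{I}^{g}$, declaring the equivalence ``easily checked.'' Your argument is exactly that check carried out directly, and it is worth noting that your construction in the ``if'' direction coincides with the HVZ Stanley decomposition once one observes that, in the squarefree case, the inner direct sum appearing in their theorem collapses to the single term $u_{A_i}\KK[Z_i]$. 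So what you gain is a self-contained, more elementary proof that avoids invoking the multigraded machinery; what the paper's route buys is brevity and the ability to quote the general statement for arbitrary monomial ideals and quotients $I/J$, of which this lemma is the special case actually used.
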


This can be further simplified. Consider the reduced associated poset
\[
P_I^k := \Set{\Set{i_1,\dots,i_m}\in P_I: m\le k}.
\]
It is \Index{partitionable} if $P_I^k$ has a disjoint partition $\calP: P_I^k = \bigcup_{i=1}^l [A_i,B_i]$ such that the cardinalities $\left| B_i \right| = k$, $1\le i \le l$. Thus, the previous lemma is equivalent to saying
\[
\sdepth(I)\ge k \Leftrightarrow P_I^k \text{ is partitionable.} \tag{\ddag}
\]
It follows that if $I$ is generated by squarefree monomials of degree $\ge d$, then $\sdepth(I)\ge d$. Meanwhile, we also have $\depth(I)\ge d$ in this case; see \cite[1.3, 3.1]{arxiv.0712.2308}.

\begin{rem}
  \label{sdepth_pure}
  Let $I$ be an $S$-ideal generated by squarefree monomials of degree $\ge d$ and $I_d$ the subideal generated by the degree $d$ generators of $I$. Since $P_I^{d+1}$ differs from $P_{I_d}^{d+1}$ only in the degree $d+1$ part,   $\sdepth(I)\ge d+1$ if and only if $\sdepth(I_d)\ge d+1$. Using the observation $\sdepth(I),\sdepth(I_d)\ge d$, this is equivalent to saying that $\sdepth(I)=d$ if and only if $\sdepth(I_d)=d$. Thus, in the following, we may assume that $I=\braket{I_d}$; we will say $I$ is \Index{pure of degree $d$} in this case.
\end{rem}

\begin{rem}
  \label{sdepth_inclusion}
  Let $I\subset J$ be two $S$-ideals which are generated by squarefree monomials of degree $d$. If $\sdepth(J)\ge d+1$, then $P_J^{d+1}$ is partitionable by (\ddag). The restriction of such a partition to $P_I^{d+1}$ shows that $P_I^{d+1}$ is also partitionable. Thus, $\sdepth(I)\ge d+1$.  Notice that in general, we cannot compare $\sdepth(I)$ with $\sdepth(J)$ even  if there exists containment between $I$ and $J$. For instance, for the three squarefree monomial ideals $I_1:=\braket{1} \supset I_2:=\braket{x_1,x_2} \supset I_3:=\braket{x_1}$ in $S=\KK[x_1,x_2]$, we will have $\sdepth(I_1)=\sdepth(I_3)=2>\sdepth(I_2)=1$.
\end{rem}

\begin{cor}
  \label{NC-1}
  Suppose $I$ is generated by squarefree monomials of degree $\ge d$ and $\sdepth(I)\ge d+1$, then the number of degree $d$ minimal generators $\mu_d(I)\le \binom{n}{d+1}$.
\end{cor}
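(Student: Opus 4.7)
The plan is to apply the partitionability characterization $(\ddag)$ directly and count. By Remark~\ref{sdepth_pure}, we may replace $I$ by its pure part and assume $I$ is pure of degree $d$; this does not change either the hypothesis $\sdepth(I)\ge d+1$ or the quantity $\mu_d(I)$. From $(\ddag)$, the assumption $\sdepth(I)\ge d+1$ gives a disjoint partition
\[
\calP\colon P_I^{d+1}=\bigcup_{i=1}^{l}[A_i,B_i]\quad\text{with}\quad |B_i|=d+1\text{ for all }i.
\]

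Next, I would identify degree $d$ minimal generators with the minimal elements of $P_I$ of cardinality $d$. If $g=x_{j_1}\cdots x_{j_d}$ is such a minimal generator and $A:=\{j_1,\dots,j_d\}$, then no proper subset of $A$ lies in $P_I$, for otherwise $g$ would fail to be minimal. Hence $A$ is a minimal element of $P_I$, and in particular of $P_I^{d+1}$. The partition places $A$ in a unique interval $[A_i,B_i]$, and the minimality of $A$ forces $A=A_i$; the corresponding top $B_i$ is then a $(d+1)$-subset of $[n]$ containing $A$.

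This produces a well-defined assignment from the set of degree $d$ minimal generators of $I$ to the set of $(d+1)$-subsets of $[n]$, sending $g\mapsto B_i$ where $[A,B_i]$ is the interval of $\calP$ containing $A$. Disjointness of the intervals in $\calP$ makes this assignment injective: distinct minimal generators correspond to distinct bottoms $A_i$, hence to distinct intervals, hence to distinct tops $B_i$. Since the codomain has cardinality $\binom{n}{d+1}$, we conclude $\mu_d(I)\le \binom{n}{d+1}$.

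There is no real obstacle here; the only point that might look slippery is verifying that the $d$-subsets associated to minimal generators really are minimal elements of $P_I$ (so that they must coincide with the interval bottoms), and this is immediate from the definition of a minimal monomial generator of a monomial ideal.
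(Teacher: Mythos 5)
Your proof is correct and follows essentially the same route as the paper: both invoke the partition $P_I^{d+1}=\bigcup_i[A_i,B_i]$ with $|B_i|=d+1$ from $(\ddag)$ and count by injectively assigning each degree-$d$ generator (which is necessarily an interval bottom) to its top $B_i$, a $(d+1)$-subset of $[n]$. The preliminary reduction to the pure case via Remark~\ref{sdepth_pure} is harmless but not needed, and otherwise your write-up just makes explicit the bijection $\left|\mySet{B_i:|A_i|=d}\right|=\left|\mySet{A_i:|A_i|=d}\right|=\mu_d(I)$ that the paper states in one line.
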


\begin{proof}
  Since $\sdepth(I)\ge d+1$, $P_I^{d+1}$ is partitionable and has a partition $P_{I}^{d+1} = \bigcup_{i=1}^l [A_i,B_i]$ with $\left| B_i \right|=d+1$. Now $\binom{n}{d+1} \ge \left| \Set{B_i: 1\le i \le l} \right| \ge \left| \mySet{B_i: \left| A_i \right|=d} \right|=\left| \mySet{A_i: \left| A_i \right|=d} \right|=\mu_d(I)$.
\end{proof}

When $n\ge 2d+1$, we have $\mu_d(I)\le \binom{n}{d} \le \binom{n}{d+1}$. Thus Corollary \ref{NC-1}  does not provide much information in this case. However, we have

\begin{prop}
  \label{small_case}
  Suppose $n\ge 2d+1$ and $I$ is generated by squarefree monomials of degree $\ge d$. Then $\sdepth(I) \ge d+1$.
\end{prop}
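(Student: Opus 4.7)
By Remark~\ref{sdepth_pure} we may reduce to the case where $I$ is pure of degree $d$, and by the equivalence (\ddag) the task becomes: show that $P_I^{d+1}$ admits a disjoint partition into intervals $[A_i,B_i]$ with $|B_i|=d+1$. Every such interval is either a singleton $[B,B]$ with $|B|=d+1$ or has $|A|=d$, $|B|=d+1$, $A\subset B$; so constructing the partition is the same as finding a matching $M$ in the bipartite graph $G$ whose two sides are
\[
\calG := \{A\in P_I : |A|=d\} \quad\text{and}\quad \calB := \{B\in P_I : |B|=d+1\},
\]
with $A\sim B$ iff $A\subset B$, such that every vertex of $\calG$ is saturated by $M$. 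Unmatched elements of $\calB$ become singleton intervals.

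The plan is to verify Hall's condition for $\calG$ by a double counting argument, and here is where the hypothesis $n\ge 2d+1$ enters. Let $\calG'\subset \calG$ be arbitrary and set $N(\calG') = \bigcup_{A\in\calG'}\{B\in\calB : A\subset B\}$. Since $I$ is a monomial ideal, $A\in P_I$ and $A\subset B\subset[n]$ force $B\in P_I$, so every $B=A\cup\{x\}$ with $x\in [n]\setminus A$ lies in $\calB$. Count the edges between $\calG'$ and $N(\calG')$ in $G$: each $A\in\calG'$ contributes exactly $n-d$ edges, while each $B\in N(\calG')$ receives at most $d+1$ edges (one for each $d$-subset of $B$). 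Hence
\[
(n-d)\,|\calG'| \;\le\; (d+1)\,|N(\calG')|,
\]
so the hypothesis $n\ge 2d+1$ gives $|N(\calG')|\ge |\calG'|$, which is exactly Hall's marriage condition.

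By Hall's theorem $G$ has a matching $M$ saturating $\calG$. Declare an interval $[A,B]$ for each edge $AB\in M$ and a singleton $[B,B]$ for each $B\in \calB$ unmatched by $M$; this is a disjoint decomposition of $P_I^{d+1}$ into intervals whose tops have size $d+1$, so $P_I^{d+1}$ is partitionable and $\sdepth(I)\ge d+1$.

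The only real content is the Hall inequality, and the combinatorial obstacle one would worry about is that $N(\calG')$ might collapse when the generators in $\calG'$ share many common variables; but the double count above is dimension-blind and uses only the sizes $|A|=d$ and $|B|=d+1$, so the threshold $n-d\ge d+1$ is precisely what forces the collapse to be harmless.
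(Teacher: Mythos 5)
Your proof is correct, but it takes a different route from the paper. The paper disposes of this case by reducing to the pure situation (Remark~\ref{sdepth_pure}), embedding $I$ into the squarefree Veronese ideal $I_{n,d}$, invoking the monotonicity observation of Remark~\ref{sdepth_inclusion}, and then quoting the external result \cite{project4}*{1.1} that $\sdepth(I_{n,d})\ge d+1$ for $n\ge 2d+1$. You instead prove the statement from scratch: after the same reduction, you verify Hall's condition for the bipartite graph between the degree-$d$ sets and the degree-$(d+1)$ sets of $P_I$ by the double count $(n-d)|\calG'|\le (d+1)|N(\calG')|$, which works because $P_I$ is an up-set (all $(d+1)$-element supersets of a $d$-element member of $P_I$ again lie in $P_I$, each $A$ having exactly $n-d$ of them and each $B$ at most $d+1$ subsets of size $d$), so $n-d\ge d+1$ gives the marriage condition and the matching yields the required interval partition. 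This is sound and self-contained, and it is philosophically close to the machinery the paper only deploys later (Hall's theorem appears in Lemma~\ref{UC}, with the harder counting done via Macaulay representations in Lemma~\ref{key-lemma}); what your argument buys is independence from \cite{project4}, while the paper's version buys brevity and, through the Veronese comparison, ties this case to the sharper results known for $I_{n,d}$.
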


\begin{proof}
  Recall that the squarefree Veronese ideal $I_{n,d}$ is the ideal generated by all degree $d$ squarefree monomials of $S=\KK[x_1,\dots,x_n]$. It follows from \cite[1.1]{project4} that $\sdepth(I_{n,d})\ge d+1$. Now, we use Remarks \ref{sdepth_pure} and \ref{sdepth_inclusion}.
\end{proof}

Inspired by the proof of Proposition \ref{small_case}, we raise the following conjecture on the Stanley depth of squarefree monomial ideals:

\begin{conj}
  If $I \subset S=\KK[x_1,\dots,x_n]$ is an ideal generated by squarefree monomials of degree $\ge d$, then
  \[
  \sdepth_S(I)\ge d+ \floor{\binom{n}{d+1}\Big/\binom{n}{d}}.
  \]
\end{conj}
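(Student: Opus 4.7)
Write $k = \lfloor\binom{n}{d+1}\big/\binom{n}{d}\rfloor = \lfloor(n-d)/(d+1)\rfloor$; the goal is $\sdepth_S(I)\ge d+k$. By Remark \ref{sdepth_pure} I may assume $I$ is pure of degree $d$, and the characterization $(\ddag)$ then reduces the task to partitioning the poset $P_I^{d+k}$ into intervals $[A_i,B_i]$ with $\left|B_i\right|=d+k$. The argument I would try to imitate is that of Proposition \ref{small_case}, which settles the base case $k=1$ by comparison with the squarefree Veronese $I_{n,d}$, and which suggests a two-step plan: first, establish the bound for $I_{n,d}$ itself; second, transfer the partition down to an arbitrary squarefree subideal $I\subset I_{n,d}$ generated in degree $d$.

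For the Veronese step, my plan is to construct an explicit partition of $P_{I_{n,d}}^{d+k}=\Set{A\subset[n]: d\le|A|\le d+k}$ into intervals of top degree $d+k$. The numerics are encouraging: the inequality $n\ge d+k(d+1)$ is exactly the room needed to enlarge a size-$d$ subset by $k$ successive blocks of $d+1$ new variables, so I would look for an injection $\varphi\colon\binom{[n]}{d}\hookrightarrow\binom{[n]}{d+k}$ with $A\subset\varphi(A)$, modelled on the matching that Bir\'o et al.\ use to obtain $\sdepth(\braket{x_1,\dots,x_n})=\ceil{n/2}$ (the $d=1$ case), and then verify that the intervals $[A,\varphi(A)]$ are pairwise disjoint and exhaust $P_{I_{n,d}}^{d+k}$.

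The main obstacle is the transfer step. Remark \ref{sdepth_inclusion} succeeds only because each size-two interval $[A,B]$ with $|A|=d$ automatically lies in $P_I$ as soon as $A$ does; once $k\ge 2$, an interval $[A,B]$ with $|B|=d+k$ occurring in a partition of $P_{I_{n,d}}^{d+k}$ may have $A\notin P_I$ while several intermediate subsets sit inside $P_I$, and restriction no longer yields a partition of $P_I^{d+k}$ by intervals of top degree $d+k$. Any successful proof will therefore have to either endow the Veronese partition with enough extra structure to reroute these orphaned intermediate elements, or abandon the reduction to the Veronese entirely and proceed by induction on $n$, writing $I=I'+x_n\cdot(I:x_n)$ with $I'\subset\KK[x_1,\dots,x_{n-1}]$, combining Stanley decompositions of the two pieces, and tracking the quantity $\lfloor(n-d)/(d+1)\rfloor$ carefully enough to preserve the bound after deleting a variable. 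The inductive route is the one I would pursue first, with the Veronese construction held in reserve as the extremal model.
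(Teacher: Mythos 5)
You have not produced a proof, and indeed the statement you are addressing is stated in the paper only as a conjecture: the paper offers no proof of it, and explicitly notes (via \cite[2.2]{project4}) that it is \emph{stronger} than the still-open conjecture \cite[2.4]{project4} of Keller--Shen--Streib--Young, raised independently by \Cimpoeas{} in \cite[1.6]{arXiv:0907.1232}, which is exactly the squarefree Veronese case. This means the first step of your two-step plan --- establishing $\sdepth(I_{n,d})\ge d+\floor{\binom{n}{d+1}/\binom{n}{d}}$ --- is itself an open problem; what is known is the matching \emph{upper} bound $\sdepth(I_{n,d})\le d+\floor{\binom{n}{d+1}/\binom{n}{d}}$ and the lower bound $d+1$ from \cite[1.1]{project4} used in Proposition \ref{small_case}, plus the case $d=1$ of Bir\'o et al. So even the ``base case'' of your argument cannot currently be supplied, and your sketch of an explicit injection $\varphi$ with $A\subset\varphi(A)$ producing disjoint, exhaustive intervals $[A,\varphi(A)]$ is precisely where the combinatorial difficulty of that conjecture lives; no such construction is known beyond $d=1$ and beyond the one-level case $k=1$.

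The second step (transfer from $I_{n,d}$ to an arbitrary pure subideal) is a genuine gap that you yourself concede. Remark \ref{sdepth_inclusion} works only because with $k=1$ an interval $[A,B]$ in a partition of $P_{I_{n,d}}^{d+1}$ restricts to either the whole interval or nothing when intersected with $P_I^{d+1}$; for $k\ge 2$ the restriction can strand intermediate faces, and the paper's own example $\braket{1}\supset\braket{x_1,x_2}\supset\braket{x_1}$ in $\KK[x_1,x_2]$ shows that Stanley depth is not monotone under inclusion, so there is no general transfer principle to invoke. Your fallback, induction on $n$ via $I=I'+x_n(I:x_n)$, is likewise only a direction: the known splitting arguments lose control of $\floor{(n-d)/(d+1)}$ when a variable is deleted, since $(I:x_n)$ need not be generated in degree $\ge d$ in a way compatible with the bound. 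In short, the proposal correctly identifies the obstacles but resolves none of them, and the statement remains a conjecture both in the paper and after your attempt.
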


Thanks to \cite[2.2]{project4}, we know $\sdepth(I_{n,d})\le d+ \floor{\binom{n}{d+1}\big/\binom{n}{d}}$. Hence this conjecture is stronger than the special case \cite[2.4]{project4}, which is also separately conjectured by {\Cimpoeas} in \cite[1.6]{arXiv:0907.1232}.

\section{The associated pure complex}

Let $k$ be a positive integer. By \cite[4.2.6]{MR1251956}, any integer $x\ge 1$ can be written uniquely in the form
\begin{equation}
  \label{MacaulayRepr}
  x = \binom{a_k}{k}+\binom{a_{k-1}}{k-1}+\cdots + \binom{a_i}{i}
\end{equation}
such that $a_k> \cdots > a_i \ge i >0$. The above sum is called the $k$-th  \Index{Macaulay representation} of $x$. For any integer $j$, $1\le j < i$, we set $a_j=j-1$. We shall call $a_k,a_{k-1},\dots,a_1$ the $k$-th \Index{Macaulay coefficients} of $x$.  They have the following nice property.

\begin{lem}
  [{\cite[4.2.7]{MR1251956}}]
  \label{427}
  Let $a_k,\dots,a_1$, respectively $a_k',\dots,a_1'$ be the $k$-th Macaulay coefficients of $x$, respectively $x'$. Then $x>x'$ if and only if 
  \[
  (a_k,\dots,a_1) > (a_k',\dots,a_1')
  \]
  in the lexicographical order.
\end{lem}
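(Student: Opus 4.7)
The plan is to exploit the uniqueness of the $k$-th Macaulay representation (which makes the map $x \mapsto (a_k, \dots, a_1)$ injective) together with trichotomy, so that it suffices to prove only one direction: if $(a_k,\dots,a_1) > (a_k',\dots,a_1')$ in the lexicographical order, then $x > x'$. Indeed, once this is established: when $x = x'$ uniqueness forces the tuples to coincide; and if the tuples were lex-reversed while $x > x'$, applying the implication with the roles swapped would give $x' > x$, a contradiction. So the converse implication comes for free.

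For the main implication, let $j$ be the largest index at which the two tuples disagree, so $a_i = a_i'$ for $i > j$ while $a_j > a_j'$. The top terms cancel in $x - x'$, leaving
\[
x - x' = \binom{a_j}{j} - \binom{a_j'}{j} + \sum_{i=1}^{j-1}\binom{a_i}{i} - \sum_{i=1}^{j-1}\binom{a_i'}{i}.
\]
Dropping the nonnegative middle sum $\sum_{i<j}\binom{a_i}{i}$ and using $a_j \geq a_j' + 1$ together with Pascal's identity $\binom{a_j'+1}{j} = \binom{a_j'}{j} + \binom{a_j'}{j-1}$, the task reduces to the purely combinatorial inequality
\[
\binom{a_j'}{j-1} > \sum_{i=1}^{j-1}\binom{a_i'}{i}.
\]

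The heart of the argument is this last inequality. The strict descent $a_j' > a_{j-1}' > \dots > a_1'$ (extended by the convention $a_m' = m-1$ below the smallest genuine index of the representation) yields $a_{j-t}' \leq a_j' - t$ for every $1 \leq t \leq j-1$, hence
\[
\sum_{t=1}^{j-1}\binom{a_{j-t}'}{j-t} \leq \sum_{t=1}^{j-1}\binom{a_j'-t}{j-t}.
\]
Iterated application of Pascal's identity to $\binom{a_j'}{j-1}$ telescopes to
\[
\binom{a_j'}{j-1} = \sum_{t=1}^{j-1}\binom{a_j'-t}{j-t} + \binom{a_j'-j+1}{0},
\]
and since the trailing term equals $1$, the right-hand side strictly exceeds the tail, which finishes the proof.

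The one piece of bookkeeping I anticipate as a minor nuisance, though not a real obstacle, is the boundary case where $j$ falls below the smallest genuine Macaulay index of $x$ or $x'$, so some $a_i$ or $a_i'$ takes the conventional value $i-1$. Since then $\binom{i-1}{i} = 0$, all such terms contribute $0$ to the sums involved and only strengthen the estimates. The essential content of the proof is the Pascal telescoping in the last display; once that identity is spotted, every other step is routine algebra.
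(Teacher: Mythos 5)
Your proof is correct: the reduction to one direction via uniqueness and trichotomy, the comparison at the largest index of disagreement, and the telescoped Pascal identity $\binom{a}{j-1}=\sum_{t=1}^{j-1}\binom{a-t}{j-t}+\binom{a-j+1}{0}$ (valid since $a_j'\ge j-1$) all hold, with the conventional coefficients $a_m=m-1$ contributing zero as you note. The paper itself gives no proof, citing Bruns--Herzog 4.2.7, and your argument is essentially the standard one found there, so there is nothing substantively different to compare.
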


Now, let $\delta:=n-d$ be the difference of degrees and write $\xi_{\delta}:=\sum_{j=1}^{\delta}\binom{2j-1}{j}$.  In Theorem \ref{main-thm}, we need comparing the integer $\xi_{n-d}$ with $\binom{n}{d+1}$. 

\begin{lem}
  \label{min}
  Let $1\le \delta < n$. Then
  \[
  \min \left(\xi_{\delta}, \binom{n}{\delta-1}\right) =
  \begin{cases}
    \xi_{\delta} &  \text{ if $n\ge 2\delta$}, \\
    \binom{n}{\delta-1} & \text{ if $n\le 2\delta-1$}.
  \end{cases}
  \]
\end{lem}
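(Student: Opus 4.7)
The plan is to reduce the lemma, in each case of the dichotomy, to comparing $\xi_\delta$ with a single extreme value of $\binom{n}{\delta-1}$, by exploiting the monotonicity of $n \mapsto \binom{n}{\delta-1}$ for $n \ge \delta-1$ (which holds automatically in the given range $\delta < n$). This splits the work into one essentially trivial direction and one inductive inequality.

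For the case $n \le 2\delta-1$, I would observe that monotonicity yields $\binom{n}{\delta-1} \le \binom{2\delta-1}{\delta-1}$, and then use the symmetry $\binom{2\delta-1}{\delta-1} = \binom{2\delta-1}{\delta}$ to recognize the right-hand side as exactly the $j=\delta$ summand in the definition of $\xi_\delta$. This immediately gives $\binom{n}{\delta-1} \le \xi_\delta$, so the minimum is $\binom{n}{\delta-1}$, as claimed.

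For the case $n \ge 2\delta$, monotonicity reduces the task to proving the single inequality $\binom{2\delta}{\delta-1} \ge \xi_\delta$, which I would settle by induction on $\delta$. The base case $\delta = 1$ is the trivial equality $\binom{2}{0} = 1 = \xi_1$. For the inductive step, the strategy is to combine the symmetry $\binom{2\delta+2}{\delta} = \binom{2\delta+2}{\delta+2}$ with two applications of Pascal's rule to obtain
\[
\binom{2\delta+2}{\delta} = \binom{2\delta+1}{\delta+1} + \binom{2\delta+1}{\delta-1} \ge \binom{2\delta+1}{\delta+1} + \binom{2\delta}{\delta-1},
\]
and then invoke the induction hypothesis $\binom{2\delta}{\delta-1} \ge \xi_\delta$ to conclude $\binom{2\delta+2}{\delta} \ge \binom{2\delta+1}{\delta+1} + \xi_\delta = \xi_{\delta+1}$.

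The main potential obstacle is bookkeeping: the Pascal and symmetry identities must remain valid at boundary values of $\delta$ (for example when $\delta = 1$ or $2$, some of the binomials appearing in the induction step have negative lower indices and vanish by convention). A direct sanity check for these small cases, rather than any conceptual difficulty, is what I expect to occupy the remaining attention.
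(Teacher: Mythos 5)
Your proof is correct, but it takes a genuinely different route from the paper. The paper computes the $(\delta-1)$-th Macaulay representations of both quantities --- $\binom{n}{\delta-1}$ has coefficients $n,\delta-3,\delta-4,\dots,1,0$, while $\xi_\delta$ (after rewriting $\binom{2j-1}{j}=\binom{2j-1}{j-1}$) has coefficients $2\delta-1,2\delta-3,\dots,7,5,4$ --- and then invokes the lexicographic comparison criterion of Lemma \ref{427}, treating $\delta=1,2$ by hand. That choice is partly thematic: Macaulay representations are the working tool of the surrounding section (they reappear in Lemma \ref{key-lemma} and in the shadow computations), so the paper gets the lemma almost for free once the two representations are written down, and it even yields strict inequality for $\delta\ge 3$. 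Your argument is more elementary and self-contained: monotonicity of $n\mapsto\binom{n}{\delta-1}$ reduces each case to a single extremal comparison; the case $n\le 2\delta-1$ is immediate from $\binom{2\delta-1}{\delta-1}=\binom{2\delta-1}{\delta}\le\xi_\delta$; and the inequality $\binom{2\delta}{\delta-1}\ge\xi_\delta$ follows by induction on $\delta$ via the correct identity $\binom{2\delta+2}{\delta}=\binom{2\delta+1}{\delta+1}+\binom{2\delta+1}{\delta-1}$ together with $\binom{2\delta+1}{\delta-1}\ge\binom{2\delta}{\delta-1}$ and $\xi_{\delta+1}=\xi_\delta+\binom{2\delta+1}{\delta+1}$. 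The boundary cases you flag are harmless (only weak inequalities are needed, and the formula in the lemma is consistent at the equality points $\delta=1,2$, $n=2\delta$), so your proof stands; it trades the paper's reliance on the Macaulay machinery for a short Pascal-rule induction, at the cost of not exhibiting the representations that the rest of the section reuses.
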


\begin{proof}
  The cases when $\delta=1$ and $2$ can be verified directly. Thus, we assume that $\delta\ge 3$.  Note that the $(\delta-1)$-th Macaulay coefficients of $\binom{n}{\delta-1}$ is $n,\delta-3,\delta-4,\dots,1,0$. Meanwhile, the $(\delta-1)$-th Macaulay coefficients of $\xi_{\delta}$ is $2\delta-1,2\delta-3,\dots,7,5,4$.  When $n\ge 2\delta$,
  \[
  (n,\delta-3,\delta-4,\dots,1,0) > (2\delta-1,2\delta-3,\dots,7,5,4)
  \]
  in the lexicographical order. When $n\le 2\delta-1$, we have the opposite comparison result. Therefore, the conclusion follows from Lemma \ref{427}.
\end{proof}

For each squarefree monomial $m=x_{i_1}\cdots x_{i_k}\in S$, we denote the set $\Set{1,\dots,n}\setminus \Set{i_1,\dots,i_k}$ by $m^\complement$. Suppose $I$ is a squarefree monomial $S$-ideal and $G(I)$ is the set of minimal generating monomials of $I$. We call the simplicial complex $\Delta^\complement(I):=\braket{m^\complement: m\in G(I)}$ the \Index{complement complex of $I$}. For each simplicial complex over $[n]$, there is a unique squarefree monomial ideal $I$ such that $\Delta=\Delta^\complement(I)$. Thus, we will call $I$ the \Index{complement ideal of $\Delta$}. It is clear that $I$ is generated by its degree $k$ part $I_k$ if and only if $\Delta^\complement(I)$ is pure of dimension $n-k-1$. When $\Delta^\complement(I)$ is pure, the number of facets $f_{n-k-1}(\Delta^\complement(I))=\mu(I)$.

Now, let $I$ be a squarefree monomial ideal which is pure of degree $d$. We will relate the reduced associated poset $P_I^{d+1}$ of $I$ with its complement complex $\Delta^\complement(I)$. Each interval $[A,B]\subset P_I^{d+1}$ with $\left| A \right|=d$ and $\left| B \right|=d+1$ corresponds to the pair $([n]\setminus A, [n]\setminus B)$. Notice that $[n]\setminus A$ is a facet of $\Delta^\complement(I)$ and $[n]\setminus B$ is a face contained in $[n]\setminus A$. Now it is clear that the following three conditions are equivalent:
\begin{enumerate}[a]
  \item The Stanley depth $\sdepth(I)\ge d+1$;
  \item The reduced associated poset $P_I^{d+1}$ is partitionable;
  \item For each facet $F$ of $\Delta=\Delta^\complement(I)$, we can suitably drop a vertex to get a face $\widetilde{F}$, such that all these $\widetilde{F}$'s are pairwise distinct.
\end{enumerate}

The third condition is closely related to the problem of finding systems of distinct representatives (SDR). It provides the framework for our further investigation. In the following, we will call a pure simplicial complex $\Delta$  \Index{uniformly collapsible} if it satisfies the third condition above. It is straightforward to see that if $\Delta$ is a uniformly collapsible complex of dimension $\delta-1$, then $f_{\delta-2}\ge f_{\delta-1}$. Here, $f(\Delta)=(f_{-1}=1,f_0,\dots,f_{\delta-1})$ is the $f$-vector of $\Delta$. Actually, we have the following characterization:

\begin{lem}
  \label{UC}
  For any $(\delta-1)$-dimensional pure simplicial complex $\Delta$, the following two conditions are equivalent:
  \begin{enumerate}[a]
    \item The complex $\Delta$ is uniformly collapsible;
    \item For each $(\delta-1)$-dimensional (pure) subcomplex $\Delta'$, we have $f_{\delta-2}(\Delta') \ge f_{\delta-1}(\Delta')$.
  \end{enumerate}
\end{lem}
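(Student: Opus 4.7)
The plan is to recognize the third (uniformly collapsible) condition as the existence of a system of distinct representatives for the facets of $\Delta$, where each facet $F$ must be assigned one of its $(\delta-2)$-faces. This puts Hall's marriage theorem at the center of the argument.

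For the direction (a) $\Rightarrow$ (b), I would argue directly. Suppose $\Delta$ is uniformly collapsible via an assignment $F\mapsto \widetilde{F}$. Given any pure $(\delta-1)$-dimensional subcomplex $\Delta'$, every facet of $\Delta'$ is already a facet of $\Delta$, and the assigned face $\widetilde{F}\subset F$ lies in $\Delta'$ because $\Delta'$ contains all subsets of $F$. Restricting the assignment to the facets of $\Delta'$ thus yields $f_{\delta-1}(\Delta')$ pairwise distinct $(\delta-2)$-faces in $\Delta'$, so $f_{\delta-2}(\Delta')\ge f_{\delta-1}(\Delta')$.

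For the converse (b) $\Rightarrow$ (a), I would build the bipartite graph $G$ on vertex classes $\mathcal{F} = \{\text{facets of }\Delta\}$ and $\mathcal{E} = \{(\delta-2)\text{-faces of }\Delta\}$, with an edge between $F\in\mathcal{F}$ and $\widetilde{F}\in\mathcal{E}$ exactly when $\widetilde{F}\subset F$. A uniform collapsing is precisely a perfect matching saturating $\mathcal{F}$, and by Hall's marriage theorem such a matching exists iff every $\mathcal{S}\subseteq\mathcal{F}$ satisfies $|N(\mathcal{S})|\ge|\mathcal{S}|$. Given such an $\mathcal{S}$, let $\Delta'$ be the pure $(\delta-1)$-dimensional subcomplex whose facets are exactly $\mathcal{S}$. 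Then $f_{\delta-1}(\Delta')=|\mathcal{S}|$, and every $(\delta-2)$-face of $\Delta'$ is of the form $F\setminus\{v\}$ for some $F\in\mathcal{S}$ and $v\in F$, i.e.\ belongs to $N(\mathcal{S})$; conversely every such face lies in $\Delta'$. Hence $f_{\delta-2}(\Delta')=|N(\mathcal{S})|$, and the hypothesis (b) applied to $\Delta'$ yields Hall's condition.

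The main conceptual step is the identification $f_{\delta-2}(\Delta')=|N(\mathcal{S})|$, which ensures that the $f$-vector inequality in (b) is precisely Hall's condition and not merely a consequence of it. After that the argument is essentially a translation; no genuine obstacle should appear. I would also note in passing that the $(\delta-1)$-dimensional pure subcomplex $\Delta'$ is uniquely determined by its facet set $\mathcal{S}$, so quantifying over all pure subcomplexes in (b) is the same as quantifying over all subsets of facets, which is exactly what Hall requires.
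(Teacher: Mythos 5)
Your proof is correct and follows essentially the same route as the paper: the same bipartite graph between facets and $(\delta-2)$-faces, the same application of Hall's marriage theorem, and the same key identification of the neighborhood of a set of facets with the $(\delta-2)$-faces of the pure subcomplex it generates. The only cosmetic difference is that you handle (a)$\Rightarrow$(b) by directly restricting the assignment, while the paper lets the ``if and only if'' in Hall's theorem cover both directions.
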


\begin{proof}
  For the pure complex $\Delta$, we consider its associated bipartite graph $G$ defined as follows. The vertex set is $V(G)=X\cup Y$ where $X$ is the set of all ($\delta-1$)-dimensional  faces (facets) of $\Delta$, while $Y$ is the set of all ($\delta-2$)-dimensional  faces of $\Delta$. An edge of $G$ has endpoints $x\in X$ and $y\in Y$ if and only if $x\supset y$ in $\Delta$. We will use $\Gamma(x)$ to denote the set of all vertices adjacent to a given vertex $x\in X$. If $A$ is a subset of $X$, we denote by $\Gamma(A)$ the set $\bigcup_{a\in A}\Gamma(a)$. Let $\Delta'(A)$ be the simplicial complex $\braket{A}$. Then $\Gamma(A)$ is the set of all ($\delta-2$)-dimensional  faces of $\Delta'(A)$. Now, our claim follows directly from the famous P.\ Hall's marriage theorem \cite[5.1]{MR1871828}, which says that a necessary and sufficient condition for there to be a complete matching from $X$ to $Y$ in $G$ is that $|\Gamma(A)| \ge |A|$ for every $A \subset X$. Since $f_{\delta-2}(\Delta'(A))=|\Gamma(A)|$ and $f_{\delta-1}(\Delta'(A))=|A|$, we are done.
\end{proof}

\begin{cor}
  \label{NUC}
  The pure simplicial complex $\Delta$ is uniformly collapsible if and only if $f_{\delta-2}(\Delta) \ge f_{\delta-1}(\Delta)$ and all its proper subcomplexes of same dimension $\delta-1$ are uniformly collapsible .
\end{cor}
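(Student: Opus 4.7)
The plan is to derive this purely from Lemma \ref{UC}, using the observation (recorded in the paragraph preceding Lemma \ref{UC}) that a uniformly collapsible complex of dimension $\delta-1$ automatically satisfies $f_{\delta-2}\ge f_{\delta-1}$.

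For the forward direction, suppose $\Delta$ is uniformly collapsible. Applying the cited observation to $\Delta$ itself yields $f_{\delta-2}(\Delta)\ge f_{\delta-1}(\Delta)$. Now let $\Delta''$ be any proper $(\delta-1)$-dimensional pure subcomplex of $\Delta$. Every $(\delta-1)$-dimensional pure subcomplex $\Delta'$ of $\Delta''$ is also a $(\delta-1)$-dimensional pure subcomplex of $\Delta$, so by Lemma \ref{UC} applied to $\Delta$ we get $f_{\delta-2}(\Delta')\ge f_{\delta-1}(\Delta')$. Applying Lemma \ref{UC} in the reverse direction to $\Delta''$, we conclude that $\Delta''$ is uniformly collapsible.

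For the backward direction, assume $f_{\delta-2}(\Delta)\ge f_{\delta-1}(\Delta)$ and every proper $(\delta-1)$-dimensional pure subcomplex of $\Delta$ is uniformly collapsible. By Lemma \ref{UC}, it suffices to verify $f_{\delta-2}(\Delta')\ge f_{\delta-1}(\Delta')$ for every $(\delta-1)$-dimensional pure subcomplex $\Delta'$ of $\Delta$. If $\Delta'=\Delta$, this is the given hypothesis. Otherwise $\Delta'$ is a proper subcomplex, hence uniformly collapsible by assumption, and the observation above again gives $f_{\delta-2}(\Delta')\ge f_{\delta-1}(\Delta')$. Lemma \ref{UC} then yields that $\Delta$ is uniformly collapsible.

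No real obstacle is expected: the content of the corollary is essentially a reorganization of Lemma \ref{UC} with the dimension-counting observation used to bridge between the two formulations, so the argument should be a short two-direction verification as sketched above.
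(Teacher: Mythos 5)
Your proposal is correct and matches the paper's intent: the corollary is stated as an immediate consequence of Lemma \ref{UC} together with the observation that a uniformly collapsible $(\delta-1)$-dimensional complex satisfies $f_{\delta-2}\ge f_{\delta-1}$, which is exactly the two-direction bookkeeping you carry out. The only point worth noting is that the relevant subcomplexes are the pure ones (as in Lemma \ref{UC}), which you handle correctly by restricting to pure subcomplexes throughout.
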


Before we proceed to the next technical lemma, we need reviewing one nice combinatorial interpretation of the Catalan numbers $C_n:=\frac{1}{n+1}\binom{2n}{n}=\binom{2n}{n}-\binom{2n}{n+1}$ for $n\ge 0$.

\begin{rem}
  [{\cite[14.8]{MR1871828}}]
  \label{paths}
  Consider walks in the $X$-$Y$ plane where each step is $U:(x,y)\to (x+1,y+1)$ or $D:(x,y)\to (x+1,y-1)$. Let $A=(0,k)$ and $B=(n,m)$ be two integral points on the upper halfplane.  It follows from the {\Andre}'s \emph{reflection principle} that there are $\binom{n}{l_2}-\binom{n}{l_1}$ paths from $A$ to $B$ that do not meet the $X$-axis. Here, $2l_1=n-k-m$ and $2l_2=n-m+k$. As a result, there are $C_{n-1}$ paths from $(0,0)$ to $(2n,0)$ in the upper halfplane that do not meet the $X$-axis between these two points. Furthermore, if we allow the paths to meet the $X$-axis without crossing, then the number is $C_n$.
\end{rem}

With respect to the Macaulay representation \eqref{MacaulayRepr}, we define
\[
\partial_{k-1}(x) = \binom{a_k}{k-1}+\binom{a_{k-1}}{k-2} +\cdots + \binom{a_i}{i-1}.
\]

\begin{lem}
  \label{key-lemma}
  For any positive integer $x$ such that $x\le \xi_k:= \sum_{j=1}^k \binom{2j-1}{j}$, we have $\partial_{k-1}(x)\ge x$.
\end{lem}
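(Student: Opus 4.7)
The plan is to prove the inequality by strong induction on $k$, with the cases $k = 1$ and $k = 2$ handled by direct verification. For the inductive step $k \ge 3$, I would first observe that the Macaulay representation of $\xi_k$ at level $k$ is $\binom{2k-1}{k} + \binom{2k-3}{k-1} + \cdots + \binom{3}{2} + \binom{1}{1}$, with Macaulay coefficients $(2k-1, 2k-3, \ldots, 3, 1)$; Lemma \ref{427} then forces $a_k \le 2k-1$ whenever $x \le \xi_k$. The induction proceeds by cases on $a_k$.

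First, when $a_k = 2k-1$, comparison of Macaulay representations gives $r := x - \binom{2k-1}{k} \le \xi_{k-1}$, so the inductive hypothesis produces $\partial_{k-2}(r) \ge r$; combined with the identity $\binom{2k-1}{k-1} = \binom{2k-1}{k}$, this yields the desired inequality. Next, when $a_k \le 2k-3$ we automatically have $a_{k-1} \le 2k-4$, and a hockey-stick computation shows $r \le \binom{2k-3}{k-1} - 1 < \xi_{k-1}$, so induction again applies, now with the additional non-negative slack $\binom{a_k}{k-1} - \binom{a_k}{k}$.

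The remaining, most delicate case is $a_k = 2k-2$. If $r \le \xi_{k-1}$ the induction still works; otherwise the same hockey-stick bound forces $a_{k-1} = 2k-3$. Writing $r = \binom{2k-3}{k-1} + r'$ with $r' > \xi_{k-2}$, the key identities $\binom{2k-3}{k-2} = \binom{2k-3}{k-1}$ and $\binom{2k-2}{k-1} - \binom{2k-2}{k} = \binom{2k-3}{k-1} - \binom{2k-3}{k} = C_{k-1}$ reduce the goal to the bound $r' - \partial_{k-3}(r') \le C_{k-1}$. This is the main obstacle: here $r'$ may exceed $\xi_{k-2}$, so the inductive hypothesis cannot be applied to $r'$ directly.

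To close this gap, the plan is to bound $r' - \partial_{k-3}(r')$ by computing it at the extremal ``all-max'' Macaulay sequence $(a_{k-2}, \ldots, a_1) = (2k-4, 2k-5, \ldots, k-1)$. Repeated hockey-stick identities yield $r'_{\max} = \binom{2k-3}{k-1} - 1$ and $\partial_{k-3}(r'_{\max}) = \binom{2k-3}{k}$, giving extremal deficit $\binom{2k-3}{k-1} - \binom{2k-3}{k} - 1 = C_{k-1} - 1 < C_{k-1}$. A monotonicity argument then shows this is actually the supremum: each summand $\binom{a_j}{j} - \binom{a_j}{j-1}$ is a $U$-shaped function of $a_j$ whose minimum occurs near $a_j = 2j-3$, and since the all-max values $M_j = k + j - 2$ satisfy $M_j \ge 2j - 1$ for every relevant $j$, any admissible $a_j \in [j, M_j]$ obeys $\binom{a_j}{j} - \binom{a_j}{j-1} \le \binom{M_j}{j} - \binom{M_j}{j-1}$. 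Summing the coordinatewise bounds yields the required inequality, completing the induction.
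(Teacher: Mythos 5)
Your proposal is correct and follows essentially the same route as the paper: both use Lemma \ref{427} to force $a_k\le 2k-1$, peel off $\binom{2k-1}{k}$ by induction when $a_k=2k-1$, and otherwise bound each term $\binom{a_j}{j}-\binom{a_j}{j-1}$ by its value at the extremal sequence $a_j=k+j-2$, where the exact evaluation shows the total deficit is $C_{k-1}-1$, exactly one less than the gain $\binom{2k-2}{k-1}-\binom{2k-2}{k}=C_{k-1}$ (the paper states this as the difference being $1$). The differences are only organizational: your three-way case split on $a_k$ together with Pascal/hockey-stick identities replaces the paper's $k_0$ device and its lattice-path monotonicity arguments.
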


\begin{proof}
  Suppose \eqref{MacaulayRepr} gives the Macaulay representation of $x$. We need to show
  \[
  \sum_{j=i}^k \binom{a_j}{j-1} \ge \sum_{j=i}^k \binom{a_j}{j}.
  \]
  In view of Lemma \ref{427}, we obtain $a_k\le 2k-1$. If $a_k=2k-1$, we can consider the case where $k'=k-1$ and $x'=x-\binom{2k-1}{k}$. Now $x'\le \sum_{j=1}^{k'-1}\binom{2j-1}{j}$. The conclusion will follow from the induction on $k$, with the case $k=1$ being trivial.

  Thus we may assume that $a_k < 2k-1$. Let $k_0$ be the smallest integer such that for all $k_0\le j \le k$ we have $a_j< 2j-1$.  Now, it suffices to prove
  \begin{equation}
    \label{aim}
    \sum_{j=k_0}^k \left( \binom{a_j}{j-1}-\binom{a_j}{j} \right)
    \ge
    \sum_{j=i}^{k_0-1} \left(\binom{a_j}{j}-\binom{a_j}{j-1} \right).
  \end{equation}

  First of all, let us look at the summand on the left hand side of the inequality \eqref{aim}.  By our choice of $k_0$, we have $k_0>1$ and  $a_{k_0-1}\ge 2k_0-3$. Thus, for $j=k_0,k_0+1,\dots,k$, we have $a_j\ge j+k_0-2$.  When $a_j<2j-1$, the integer
  \begin{equation}
    \label{func1}
    \binom{a_j}{j-1}-\binom{a_j}{j}=\binom{a_j}{a_j-j+1}-\binom{a_j}{a_j-j}
  \end{equation}
  is the number of paths in the $X$-$Y$ plane from $A=(0,1)$ to $B_{j,a_j}=(a_j,2j-1-a_j)$ that do not meet the $X$-axis. In particular, this is a positive integer. When $a_j<2j-2$, any such a path followed by a step $D$ as in Remark \ref{paths} gives a path from $A$ to $B_{j,a_j+1}$. Thus, \eqref{func1} is an increasing function for $a_j\in\Set{j+k_0-2,j+k_0-1,\dots,2j-2}$. Now the infimum of the left hand side of \eqref{aim} is achieved when $a_j=j+k_0-2$.  Henceforth, without loss of generality, we may assume that $k=k_0$ and $a_k=2k-2$, whence $a_{k-1}=2k-3$.

  Next, let us consider the summand on the right hand side of the inequality \eqref{aim}. Notice that $a_k=2k-2$, thus $a_j\le k-2+j$. Now we have
  \begin{equation}
    \label{func2}
    \binom{a_j}{j}-\binom{a_j}{j-1} = \binom{a_j}{j -1}\left(\frac{a_j-j+1}{j}-1 \right),
  \end{equation}
  which is positive only when $a_j\ge 2j-1$. When this condition is indeed satisfied, the integer \eqref{func2} is the number of paths in the $X$-$Y$ plane from $A=(0,1)$ to $B_{j,a_j}=(a_j,a_j+1-2j)$ that do not meet the $X$-axis.  Any such a path followed by a step $U$ as in Remark \ref{paths} gives a path from $A$ to $B_{j,a_j+1}$.  Thus, \eqref{func2} is an increasing function for $a_j\in \Set{2j-1,2j,\dots,k-2+j}$.  Now the supremum of the right hand side of \eqref{aim} is achieved when $i=1$ and $a_j=k-2+j$ for $j=1,\dots,k-1$.

  Now it suffices to prove
  \[
  \binom{2k-2}{k-1}-\binom{2k-2}{k}
  \ge
  \sum_{j=1}^{k-1} \left( \binom{k-2+j}{j}-\binom{k-2+j}{j-1} \right).
  \]
  As a matter of fact, we have
  \begin{align*}
    LHS-RHS =& \sum_{j=1}^k \binom{k-2+j}{j-1} - \sum_{j=1}^k \binom{k-2+j}{j} \\
    = & \sum_{j=2}^k \left(\binom{k-1+j}{j-1}-\binom{k-2+j}{j-2}\right) + \binom{k-1}{0} \\
    & \qquad - \sum_{j=1}^k \left(\binom{k-1+j}{j}-\binom{k-2+j}{j-1}\right) \\
    = & \left(\binom{2k-1}{k-1}-\binom{k}{0}\right) + \binom{k-1}{0} - \left(\binom{2k-1}{k}-\binom{k-1}{0}\right) \\
    = & 1.
  \end{align*}
  One can also explain this difference being $1$ by the paths argument in Remark \ref{paths}.
\end{proof}

Next, consider the following property $(*)$:
\begin{quote}
  If $\Delta$ is a pure simplicial complex of dimension $\delta-1$ and $f_{\delta-1}(\Delta) \le f_{\delta-2}(\Delta)$, then $\Delta$ is uniformly collapsible.
\end{quote}
For investigating this property, we have to be equipped with further apparatus.  We will need the following fact from \cite[p79]{MR1286816}. Define the reverse lexicographical order $\le_{rlex}$ on the $k$-subsets of $[n]:=\Set{1,2,\dots,n}$ as follows. Let $S=\Set{{i_1}<\cdots < {i_k}}$ and $T=\Set{{j_1}<\cdots < {j_k}}$ be two $k$-subsets. We say $S<_{rlex} T$ if for some $q$, we have $i_q < j_q$ and $i_p=j_p$ for $p>q$. A collection $C$ of $k$-subsets of $[n]$ is \Index{compressed} if $S<_{rlex} T$ and $T\in C$ imply $S\in C$. Since $\le_{rlex}$ is a total ordering, there is only one compressed collection of $k$-subsets of size $l$, $1\le l \le \binom{n}{k}$. We will call it $C_{n,k}^l$ and denote the ($k-1$)-dimensional simplicial complex $\braket{C_{n,k}^l}$ by $\Delta_l^{n,k}$. The complement ideal of $\Delta_l^{n,k}$ will be written as $I_{n,n-k}^l$. It is generated by $l$ squarefree monomials of degree $n-k$. For $1\le d \le n$ and $l=\binom{n}{d}$, the ideal $I_{n,d}^l$ is the usual squarefree Veronese ideal $I_{n,d}$. 

The \Index{shadow} of any collection $C$ of $k$-subsets is
\[
\partial C=\Set{S: \left| S \right|=k-1, S\subset T \text{ for some $T\in C$}}.
\]
The shadow $\partial C_{n,k}^l$ is also compressed and $\left| \partial C_{n,k}^l \right| = \partial_{k-1}(l)$. The proof of this fact can be found, for instance, in \cite[Section 8]{MR513002}. This implies that $f_{k-2}(\Delta_l^{n,k})=\partial_{k-1}(f_{k-1}(\Delta_{l}^{n,k}))=\partial_{k-1}(l)$.

When $\Delta$ is pure of dimension $\delta-1$ and $C$ is the set of all facets, then $\partial C$ is the set of all $(\delta-2)$ faces. In general, we will have $f_{\delta-2}(\Delta)\ge \partial_{\delta-1}(f_{\delta-1}(\Delta))$, namely $\left| \partial C \right| \ge \partial_{\delta-1}(\left| C \right|)$; see \cite[8.1]{MR513002}.

\begin{exam}
  \label{NotUC}
  The simplicial complex $\Delta=\Delta_{\xi_\delta+1}^{n,\delta}$ is not uniformly collapsible. For this, it suffices to observe that $f_{\delta-1}(\Delta)=\xi_{\delta}+1=\binom{2}{1}+\sum_{j=2}^{\delta}\binom{2j-1}{j}$. Thus $f_{\delta-2}(\Delta)=\partial_{\delta-1}(\xi_\delta+1)=\binom{2}{0}+\sum_{j=2}^{\delta}\binom{2j-1}{j-1} = \xi_{\delta}$ and $f_{\delta-1}(\Delta)> f_{\delta-2}(\Delta)$. Now apply Corollary \ref{NUC}.
\end{exam}

If we combine Corollary \ref{NUC} with Lemma \ref{key-lemma}, we obtain the following result:

\begin{cor}
  If $f_{\delta-1}(\Delta)=\xi_{\delta}+1$, then the property $(\ast)$ holds.
\end{cor}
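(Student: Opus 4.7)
The plan is to prove the corollary by strong induction on the number of facets $m=f_{\delta-1}(\Delta)$, establishing in fact that property $(\ast)$ holds whenever $f_{\delta-1}(\Delta)\le \xi_\delta+1$. The base case $m\le 1$ is immediate from the definition of uniformly collapsible.

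For the inductive step, assume $\Delta$ is a pure $(\delta-1)$-dimensional complex with $m=f_{\delta-1}(\Delta)\le \xi_\delta+1$ and $f_{\delta-2}(\Delta)\ge f_{\delta-1}(\Delta)$. By Corollary \ref{NUC}, it suffices to prove that every proper pure $(\delta-1)$-dimensional subcomplex $\Delta'\subsetneq \Delta$ is uniformly collapsible. Such a $\Delta'$ satisfies
\[
f_{\delta-1}(\Delta')\le m-1\le \xi_\delta,
\]
so Lemma \ref{key-lemma} applied to $x=f_{\delta-1}(\Delta')$ yields $\partial_{\delta-1}(f_{\delta-1}(\Delta'))\ge f_{\delta-1}(\Delta')$. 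Combining this with the Kruskal--Katona inequality $f_{\delta-2}(\Delta')\ge \partial_{\delta-1}(f_{\delta-1}(\Delta'))$ recalled just before Example \ref{NotUC}, we obtain $f_{\delta-2}(\Delta')\ge f_{\delta-1}(\Delta')$. Since $f_{\delta-1}(\Delta')\le \xi_\delta+1$ as well, the induction hypothesis applies to $\Delta'$, which is therefore uniformly collapsible. Corollary \ref{NUC} then promotes $\Delta$ to being uniformly collapsible, completing the induction.

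The main obstacle, or rather the conceptual point on which the argument hinges, is the propagation of the inequality $f_{\delta-2}\ge f_{\delta-1}$ from $\Delta$ down to every proper subcomplex of the same dimension. A priori nothing prevents a subcomplex of a complex satisfying $f_{\delta-1}\le f_{\delta-2}$ from violating this inequality. What saves us is precisely the numerical restriction $f_{\delta-1}\le \xi_\delta+1$: once we descend to a proper subcomplex its number of facets drops to at most $\xi_\delta$, and Lemma \ref{key-lemma} together with Kruskal--Katona restores the needed inequality unconditionally. This is exactly why $\xi_\delta$ is the correct threshold, as the failure at $\xi_\delta+1$ facets witnessed by Example \ref{NotUC} confirms.
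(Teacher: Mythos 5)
Your proof is correct and follows essentially the same route as the paper, namely combining Corollary \ref{NUC} with Lemma \ref{key-lemma} and the Kruskal--Katona bound $f_{\delta-2}\ge \partial_{\delta-1}(f_{\delta-1})$: the hypothesis handles $\Delta$ itself, while every proper pure $(\delta-1)$-dimensional subcomplex has at most $\xi_\delta$ facets and is therefore uniformly collapsible. Your explicit strong induction on the number of facets is just a formalized packaging of this same argument, so no further comment is needed.
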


However, the property ($\ast$) does not hold in general. 

\begin{exam}
  \label{counter-example}
  We already know that the simplicial complex $\Delta_{\xi_{\delta}+1}^{n,\delta}$ over the vertex set $[n]$ is not uniformly collapsible.  Now, let $\widetilde{\Delta}=\braket{\Delta_{\xi_{\delta}+1}^{n,\delta} , \Set{n,n+1,\dots,n+\delta-1}}$ be a new simplicial complex over the vertex set $[n+\delta-1]$. It is again pure of dimension $\delta-1$. Notice that
  \[
  f_{\delta-1}(\widetilde{\Delta})=f_{\delta-1}(\Delta)+1=\xi_{\delta}+2
  \]
  and when $\delta\ge 3$
  \[
  f_{\delta-2}(\widetilde{\Delta}) = f_{\delta-2}(\Delta_{\xi_{\delta}+1}^{n,\delta}) + f_{\delta-2} (\braket{\Set{n,n+1,\dots,n+\delta-1}}) = \xi_{\delta} +\delta.
  \]
  Whence, we have $f_{\delta-2}(\widetilde{\Delta}) > f_{\delta-1}(\widetilde{\Delta})$. However, $\widetilde{\Delta}$ is not uniformly collapsible because of the existence of the pure subcomplex $\Delta_{\xi_{\delta}+1}^{n,\delta}$.
\end{exam}

In the current context, we always assume that $n/2\le d\le n$, whence $2\delta\le n$. The obstacle in the previous example is created by introducing extra vertices; now the number of vertices is at least $3\delta-1$. Thus, we are interested in the following question:

\begin{ques}
  Fix the degree difference $\delta$.  If $n=2\delta$, does the property $(*)$ hold? If the answer is positive, what is the largest integer $n<3\delta-1$ such that $(*)$ holds?
\end{ques}

\section{Proof of Theorem \ref{main-thm}}

We have gathered all the apparatus for proving the main theorem. 

\begin{proof}
  By virtue of Remark \ref{sdepth_pure}, we may assume that $I$ is pure of degree $d$. For $1\le d <n$, write $\delta=n-d$ for the difference of degrees. 

  When $n\ge 2d+1$, we have $n\le 2\delta-1$. Thus 
  \[
  \min \left(\xi_{n-d}, \binom{n}{d+1}\right) = \binom{n}{d+1} 
  \]
  by virtue of Lemma \ref{min}. The condition $\mu(I)\le \binom{n}{d+1}$ is automatically satisfied and we have $\sdepth(I)\ge d+1$ from Proposition \ref{small_case}. 

  On the other hand, when $1\le d < n \le 2d$, we have $n\ge 2\delta$. Now
  \[
  \min \left(\xi_{n-d}, \binom{n}{d+1}\right) = \xi_{n-d}.
  \]
  If $\mu(I)\le \xi_{\delta}$, its complement complex $\Delta^\complement(I)$ is uniformly collapsible from Lemmas \ref{UC} and \ref{key-lemma}. Thus $\sdepth_S(I)\ge d+1$.
\end{proof}

\begin{rem}
  We want to emphasize that the condition in Theorem \ref{main-thm} is optimal. With $\delta=n-d$, there is not much to mention for the case $n\le 2\delta -1$. When $n\ge 2\delta$, we will take $I=I_{n,d}^{\xi_{\delta}+1}$.  It has been manifested in Example \ref{NotUC} that the complement complex $\Delta_{\xi_{\delta}+1}^{n,\delta}$ is not uniformly collapsible, whence $\sdepth_S(I)=d$. We finish by noticing that $\mu_d(I)=\xi_{\delta}+1$.
\end{rem}

\begin{rem}
  \label{smallest-d}
  When $n/2\le d<n$, the set
  \[
  \Xi:= \Set{I \subset S \mid \text{$I$ is pure of degree $d$ and}\sdepth(I)=d}
  \]
  is non-empty and partially ordered with respect to inclusion.  If $I\in \Xi$ is minimal, then $\mu(I) \ge \xi_{\delta}+1$. This inequality can be strict if the dimension $n$ is not too small relative to the difference $\delta=n-d$. We will only show this in the special case when $d=n-2$.  Let $G$ be the graph on $[n]$ ($1$-dimensional pure simplicial complex) with edges
  \[
  E(G)=\Set{\Set{1,2},\Set{2,3},\dots,\Set{n-1,n},\Set{n,0},\Set{1,3}}.
  \]
  It is a circle with a chord. All $1$-dimensional proper subcomplexes of $G$ are uniformly collapsible, while $G$ itself is not. Let $I$ be the degree $n-2$ complement ideal of the complex $G$. It satisfies that $\sdepth(I)=n-2$ and $\mu(I)=n+1$. Furthermore, this ideal is minimal in $\Xi$.
\end{rem}

Since $n+1$ is smaller when compared with $\binom{n}{d}$ or $\binom{n}{d+1}$ in this situation, we are interested in

\begin{ques}
  What is $ \max\Set{\mu(I)\mid \text{$I$ is minimal in $\Xi$}}$?
\end{ques}

Since $\binom{n}{d} > \binom{n}{d+1}$, the maximal element of $\Xi$ is the squarefree Veronese ideal $I_{n,d}$. Thus, the number
\[
\max\Set{\mu(I)\mid \text{$I$ is maximal in $\Xi$}}
\]
is clear.

\begin{rem}
  \label{largest-d}
  When $n/2\le d<n$, the set
  \[
  \Xi^\complement := \Set{I \subset S \mid \text{$I$ is pure of degree $d$ and}\sdepth(I)>d}
  \]
  is also nonempty.  For any $I\in \Xi^\complement$, we have $\mu(I)\le \binom{n}{d+1}$. We will show that
  \[
  \max\Set{\mu(I) | I\in \Xi^\complement}=\binom{n}{d+1}.
  \]

  Suppose $k$ is an integer with $1\le k \le n-1$. If a squarefree monomial ideal $I$ is pure of degree $k$ and $\sdepth(I)\ge k+1$, we have a union of  disjoint intervals $\bigcup_{\bdx^m\in G(I)} [m,\widetilde{m}]$ in $P_I^{k+1}$, with $|\widetilde{m}|=k+1$ for each $\bdx^m\in G(I)$. Here, $\bdx^m$ stands for $x_{i_1}x_{i_2}\cdots x_{i_k}$ if $m=\Set{i_1,\dots,i_k}$. Now, simply set $J=\braket{\bdx^{\widetilde{m}^\complement} \mid m\in G(I)}$. The squarefree monomial ideal $J$ is pure of degree $n-k-1$ and $\sdepth(J)\ge n-k$. This correspondence from $I$ to $J$, though not one-to-one, preserves the minimal number of generators.

  Now, we are reduced to show the existence of a squarefree monomial ideal $J$ that is pure of degree $n-d-1$ with $\mu(J)=\binom{n}{d+1}$ and $\sdepth(J)\ge n-d$. This monomial ideal $J$ has to be the squarefree Veronese ideal $I_{n,n-d-1}$. Since $2d\ge n-1$, it has the desired properties.
\end{rem}

Notice that any set of squarefree monomials has a squarefree shadow; see \cite[2.2]{MR2583270}. Thus, we can prove Theorem \ref{main-thm} directly, without resorting to the complement complex. However, we find this approach less intuitive, especially during the construction of the simplicial complex $\widetilde{\Delta}$ in Example \ref{counter-example} and the graph $G$ in Remark \ref{smallest-d}.


\begin{bibdiv}
\begin{biblist}

\bib{MR2583270}{incollection}{
      author={Bonanzinga, Vittoria},
      author={Ene, Viviana},
      author={Olteanu, Anda},
      author={Sorrenti, Loredana},
       title={An overview on the minimal free resolutions of lexsegment
  ideals},
        date={2009},
   booktitle={Combinatorial aspects of commutative algebra},
      series={Contemp. Math.},
      volume={502},
   publisher={Amer. Math. Soc.},
     address={Providence, RI},
       pages={5\ndash 24},
}

\bib{MR1251956}{book}{
      author={Bruns, Winfried},
      author={Herzog, J{\"u}rgen},
       title={Cohen-{M}acaulay rings},
      series={Cambridge Studies in Advanced Mathematics},
   publisher={Cambridge University Press},
     address={Cambridge},
        date={1993},
      volume={39},
        ISBN={0-521-41068-1},
}

\bib{BHKTY}{article}{
      author={Bir{\'o}, Csaba},
      author={Howard, David~M.},
      author={Keller, Mitchel~T.},
      author={Trotter, William~T.},
      author={Young, Stephen~J.},
       title={Interval partitions and {S}tanley depth},
        date={2010},
        ISSN={0097-3165},
     journal={J. Combin. Theory Ser. A},
      volume={117},
       pages={475\ndash 482},
         url={http://dx.doi.org/10.1016/j.jcta.2009.07.008},
}

\bib{bruns-2009}{article}{
      author={Bruns, Winfried},
      author={Krattenthaler, Christian},
      author={Uliczka, Jan},
       title={Stanley decompositions and {H}ilbert depth in the {K}oszul
  complex},
        date={2010},
        ISSN={1939-0807},
     journal={J. Commut. Algebra},
      volume={2},
       pages={327\ndash 357},
}

\bib{arXiv:0907.1232}{article}{
      author={Cimpoea\c{s}, Mircea},
       title={Stanley depth of square free {V}eronese ideals},
        date={2009},
      eprint={arXiv:0907.1232},
         url={http://www.citebase.org/abstract?id=oai:arXiv.org:0907.1232},
}

\bib{MR1286816}{article}{
      author={Duval, Art~M.},
       title={A combinatorial decomposition of simplicial complexes},
        date={1994},
        ISSN={0021-2172},
     journal={Israel J. Math.},
      volume={87},
       pages={77\ndash 87},
         url={http://dx.doi.org/10.1007/BF02772984},
}

\bib{MR597728}{article}{
      author={Garsia, Adriano~M.},
       title={Combinatorial methods in the theory of {C}ohen-{M}acaulay rings},
        date={1980},
        ISSN={0001-8708},
     journal={Adv. in Math.},
      volume={38},
       pages={229\ndash 266},
         url={http://dx.doi.org/10.1016/0001-8708(80)90006-7},
}

\bib{MR513002}{incollection}{
      author={Greene, Curtis},
      author={Kleitman, Daniel~J.},
       title={Proof techniques in the theory of finite sets},
        date={1978},
   booktitle={Studies in combinatorics},
      series={MAA Stud. Math.},
      volume={17},
   publisher={Math. Assoc. America},
     address={Washington, D.C.},
       pages={22\ndash 79},
}

\bib{MR2366164}{article}{
      author={Herzog, J{\"u}rgen},
      author={Jahan, Ali~Soleyman},
      author={Yassemi, Siamak},
       title={Stanley decompositions and partitionable simplicial complexes},
        date={2008},
        ISSN={0925-9899},
     journal={J. Algebraic Combin.},
      volume={27},
       pages={113\ndash 125},
}

\bib{arxiv.0712.2308}{article}{
      author={Herzog, J{\"u}rgen},
      author={Vladoiu, Marius},
      author={Zheng, Xinxian},
       title={How to compute the {S}tanley depth of a monomial ideal},
        date={2009},
        ISSN={0021-8693},
     journal={J. Algebra},
      volume={322},
       pages={3151\ndash 3169},
         url={http://dx.doi.org/10.1016/j.jalgebra.2008.01.006},
}

\bib{project4}{article}{
      author={Keller, Mitchel~T.},
      author={Shen, Yi-Huang},
      author={Streib, Noah},
      author={Young, Stephen~J.},
       title={On the {S}tanley depth of squarefree {V}eronese ideals},
        date={2011},
        ISSN={0925-9899},
     journal={J. Algebraic Combin.},
      volume={33},
       pages={313\ndash 324},
         url={http://dx.doi.org/10.1007/s10801-010-0249-1},
}

\bib{KeYo2009}{article}{
      author={Keller, Mitchel~T.},
      author={Young, Stephen~J.},
       title={Stanley depth of squarefree monomial ideals},
        date={2009},
        ISSN={0021-8693},
     journal={J. Algebra},
      volume={322},
       pages={3789\ndash 3792},
         url={http://dx.doi.org/10.1016/j.jalgebra.2009.05.021},
}

\bib{Okazaki2009}{article}{
      author={Okazaki, Ryota},
       title={A lower bound of {S}tanley depth of monomial ideals},
        date={2011},
        ISSN={1939-0807},
     journal={J. Commut. Algebra},
      volume={3},
       pages={83\ndash 88},
}

\bib{arXiv.org:0805.4461}{article}{
      author={Shen, Yi-Huang},
       title={Stanley depth of complete intersection monomial ideals and
  upper-discrete partitions},
        date={2009},
        ISSN={0021-8693},
     journal={J. Algebra},
      volume={321},
       pages={1285\ndash 1292},
}

\bib{MR526314}{article}{
      author={Stanley, Richard~P.},
       title={Balanced {C}ohen-{M}acaulay complexes},
        date={1979},
        ISSN={0002-9947},
     journal={Trans. Amer. Math. Soc.},
      volume={249},
       pages={139\ndash 157},
         url={http://dx.doi.org/10.2307/1998915},
}

\bib{MR666158}{article}{
      author={Stanley, Richard~P.},
       title={Linear {D}iophantine equations and local cohomology},
        date={1982},
        ISSN={0020-9910},
     journal={Invent. Math.},
      volume={68},
       pages={175\ndash 193},
}

\bib{MR1871828}{book}{
      author={van Lint, J.~H.},
      author={Wilson, R.~M.},
       title={A course in combinatorics},
     edition={Second},
   publisher={Cambridge University Press},
     address={Cambridge},
        date={2001},
        ISBN={0-521-00601-5},
}

\end{biblist}
\end{bibdiv}

\end{document}